\newtheorem{theorem}{Theorem}[section]
\newtheorem{lemma}[theorem]{Lemma}
\newtheorem*{conjecture}{Conjecture}
\theoremstyle{remark}
\newtheorem*{remark}{Remark}
\numberwithin{equation}{section}
\newcommand{\dN}{\mathbb N}                             
\DeclareMathOperator{\e}{\mathrm{e}}                    
\DeclareMathOperator{\dens}{\,\mathrm dens}             
\begin{document}
\title[]{Approaching Cusick's conjecture on the sum-of-digits function}
\author{Lukas Spiegelhofer}
\address{Vienna University of Technology,
Vienna, Austria}
\thanks{
The author acknowledges support by the joint project MuDeRa
between the Agence Nationale de la Recherche (ANR, France) and the FWF, project number I-1751-N26.}
\begin{abstract}
Cusick's conjecture on the binary sum of digits $s(n)$ of a nonnegative integer $n$ states the following:
for all nonnegative integers $t$ we have
\[
c_t=\lim_{N\rightarrow\infty}
\frac 1N
\left\lvert
\{n<N:s(n+t)\geq s(n)\}
\right\rvert
>1/2.
\]

We prove that for given $\varepsilon>0$
we have
\[c_t+c_{t'}>1-\varepsilon\]
if the binary expansion of $t$ contains enough blocks of consecutive $\mathtt 1$s (depending on $\varepsilon$),
where $t'=3\cdot 2^\lambda-t$ and $\lambda$ is chosen such that $2^\lambda\leq t<2^{\lambda+1}$.

\end{abstract}
\maketitle
\section{Introduction}
The binary sum-of-digits function $s$ is defined by
\[s\left(\varepsilon_\nu 2^\nu+\cdots+\varepsilon_02^0\right)
=\varepsilon_\nu+\cdots+\varepsilon_0\]
for all digits $\varepsilon_i\in\{0,1\}$.
It is an elementary yet difficult problem to consider the behaviour of $s$ under addition of a constant.
T.~W.~Cusick (private communication) proposed the following conjecture.
\begin{conjecture}
For a nonnegative integer $t$, define
\begin{equation*}
c_t=\dens\{n\geq 0:s(n+t)\geq s(n)\},
\end{equation*}
where $\dens A$ denotes the asymptotic density of a set $A\subseteq \dN$.
Then $c_t>1/2$.
\end{conjecture}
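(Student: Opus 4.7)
The plan is to prove the equivalent statement that the set of $n$ where $s(\cdot)$ strictly decreases under \emph{both} shifts by $t$ and by $t'$ has small density.

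\emph{Step 1: Inclusion--exclusion.}
Set $A = \{n : s(n+t) \geq s(n)\}$ and $B = \{n : s(n+t') \geq s(n)\}$, so that $c_t = \dens A$ and $c_{t'} = \dens B$. From $\dens A + \dens B = \dens(A \cup B) + \dens(A \cap B) \geq \dens(A\cup B)$ and $\dens(A\cup B) = 1 - \dens(A^c \cap B^c)$, we get
\[
c_t + c_{t'} \geq 1 - D(t), \qquad D(t) := \dens\{n : s(n+t) < s(n) \text{ and } s(n+t') < s(n)\}.
\]
It suffices to show $D(t) < \varepsilon$ whenever the number $B(t)$ of maximal $1$-blocks of $t$ is large enough.

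\emph{Step 2: Carry reformulation.}
The identity $s(a+b) = s(a) + s(b) - c(a,b)$, where $c(a,b)$ is the number of binary carries in $a+b$, rewrites $s(n+t) < s(n)$ as $c(n,t) > s(t)$. Thus
\[
D(t) = \dens\{n : c(n,t) > s(t) \text{ and } c(n,t') > s(t')\}.
\]

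\emph{Step 3: Bit structure of $(t,t')$.}
Write $t = 2^\lambda + r$ with $0 < r < 2^\lambda$ and let $j$ be the position of the lowest $1$-bit of $r$. From $t' = 2^\lambda + (2^\lambda - r)$ one checks: $t$ and $t'$ share the leading $1$ at position $\lambda$; they share the trailing $j+1$ bits (a $1$ at position $j$ and zeros below it); and they are \emph{bitwise complementary} at every middle position $j < i < \lambda$. In particular, large $B(t)$ forces many middle positions at which exactly one of $t, t'$ carries a $1$.

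\emph{Step 4: Joint tail bound via negative correlation.}
Decompose $c(n,t)$ as a sum of $\Theta(B(t))$ bounded local functionals of $n$, each supported in an $O(1)$-neighborhood of a $1$-block of $t$, and likewise for $c(n,t')$. Standard concentration for such block sums places each of $c(n,t) - \mathbb{E}\, c(n,t)$ and $c(n,t') - \mathbb{E}\, c(n,t')$ on a band of width $\Theta(\sqrt{B(t)})$. The complementarity from Step~3 couples the two decompositions into a strongly \emph{negatively} correlated pair: at every middle position with $t_i = 1 \ne t'_i$, flipping $n_i$ pushes the two carry counts in opposite directions. A bivariate Hoeffding--Chernoff estimate built from this coupling then gives
\[
D(t) = O\bigl(1/\sqrt{B(t)}\bigr),
\]
which is below $\varepsilon$ for $B(t)$ sufficiently large in terms of $\varepsilon$.

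I expect Step~4 to be the main obstacle: the single-variable Gaussian asymptotics for $c(\cdot, t)$ do not by themselves give the joint large-deviation bound on the pair $\bigl(c(n,t),\, c(n,t')\bigr)$, and the required bivariate analysis must track how carry cascades in the additions $n+t$ and $n+t'$ interact across the complementary middle positions, as well as handle boundary corrections at positions $j$ and $\lambda$ where $t$ and $t'$ agree. The precise algebraic input $t + t' = 3 \cdot 2^\lambda$ is exactly what makes the two carry processes interlock cleanly enough for this analysis to deliver the needed decay.
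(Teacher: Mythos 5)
The statement you are asked about is Cusick's conjecture itself, which is open; the paper does not prove it and explicitly says so (``the full statements are still open''). Your argument, even if every step were completed, would not prove $c_t>1/2$: Step~1 reduces to showing $c_t+c_{t'}\geq 1-\varepsilon$, which says nothing about either summand individually, and your hypothesis ``$B(t)$ large enough'' restricts to special $t$. So at best you are targeting the paper's Theorem~\ref{thm_main} (the approximate, combined, restricted statement), not the conjecture. That mismatch is the first and most serious gap.

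Within that weaker goal there is a second genuine gap at Step~4. The carry count $c(n,t)$ is \emph{not} a sum of bounded functionals each supported in an $O(1)$-neighborhood of a $\mathtt 1$-block of $t$: a carry emitted by a block of $t$ propagates upward through every consecutive $\mathtt 1$ of $n$ above it, contributing one carry per position, so the local contributions have unbounded support and are not independent. The rigorous substitute for your ``standard concentration'' is exactly what the paper builds: the Fourier transform $\omega_t(\vartheta)=\sum_k\varphi(k,t)\e(k\vartheta)$ satisfies a matrix-product recurrence, and Lemma~\ref{lem_omega_estimate} extracts a contraction factor $1-\frac12\lVert\vartheta\rVert^2$ from each pattern $\mathtt 1\mathtt 0\mathtt 0$ or $\mathtt 1\mathtt 0\mathtt 1$, which is the Markov-chain version of your block decomposition. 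Moreover, the paper only ever needs the \emph{marginal} distribution $\varphi(\cdot,t)$ to be nearly equidistributed modulo $m$, combined with the symmetry $\varphi(k,t)=\varphi(-k,t')$, to bound $c_t+c_{t'}$ via~\eqref{eqn_fundamental}; your route through $D(t)=\dens\{n: c(n,t)>s(t)\text{ and }c(n,t')>s(t')\}$ requires controlling the \emph{joint} distribution of the two carry processes, a strictly stronger statement than the paper establishes, and the ``bivariate Hoeffding--Chernoff estimate'' you invoke for it is asserted, not proved. Unless you can either supply that bivariate large-deviation bound or replace Step~4 by a marginal argument in the spirit of the paper, the proof does not go through even for the weakened statement.
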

We note that the set in question is in fact a finite union of arithmetic progressions, so there are no problems of convergence (see B\'esineau~\cite{B1972}). 
This conjecture arose when Cusick was working on a related conjecture due to Tu and Deng~\cite{TD2011,TD2012}, which concerns binary addition modulo $2^k-1$.
Tu and Deng's conjecture is of interest since it allows constructing Boolean functions with desirable cryptographic properties.
Partial results on the Tu--Deng conjecture have been obtained, see for example~\cite{CLS2011,DY2012,FRCM2010,QSF2016}.
Moreover, both Cusick's conjecture and the Tu--Deng conjecture have been proven \emph{asymptotically}, the former by Drmota, Kauers, and the author~\cite{DKS2016}, and the latter by Wallner and the author~\cite{SW2019}, but the full statements are still open.
In particular, we wish to note that no bound of the form $c_t>a$ or $c_t<b$ for some $a>0$ or $b<1$, valid for all $t\geq 0$, is known!
In this paper we concentrate on Cusick's conjecture.
The abovementioned result by Drmota, Kauers and the author~\cite{DKS2016}
is the following: we have $c_t>1/2$ for almost all $t$ in the sense of asymptotic density, that is,
\[\dens\{t:c_t>1/2\}=1.\]
However, this theorem does not tell us anything about the structure of such a set of ``good'' $t$; it does not provide a statement allowing to extract many examples of integers $t$ satisfying Cusick's conjecture.

The present paper constitutes a step in this direction.
More precisely, since the original conjecture is elusive and too hard, we consider a simplified version. In order to formulate this easier statement and our main theorem, we define
$t'=3\cdot 2^\lambda-t$, where $2^\lambda\leq t<2^{\lambda+1}$.

\begin{conjecture}[Cusick, simplified]
For all $t\geq 0$, we have $c_t+c_{t'}>1$.
In other words, at least one out of $t$ or $t'$ satisfies Cusick's conjecture.
\end{conjecture}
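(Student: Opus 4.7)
The plan is to recast the statistic in carry-theoretic terms. Kummer's theorem in base~$2$ gives $s(n+t) = s(n) + s(t) - c(n,t)$, where $c(n,t)$ is the number of carries produced when adding $n$ and $t$ in binary. Hence
\[
c_t = \dens\bigl\{n \geq 0 : c(n,t) \leq s(t)\bigr\},
\]
so the simplified conjecture becomes the assertion that the ``small-carry'' densities for $t$ and for $t'$ jointly exceed $1$.

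The structural hook I would exploit is the arithmetic identity $t + t' = 3 \cdot 2^\lambda$: writing $t = 2^\lambda + a$ with $0 \leq a < 2^\lambda$ one has $t' = 2^{\lambda+1} - a$, so $t'$ arises from $t$ by a two's-complement-type reflection of the low $\lambda + 1$ bits. I would try to build an explicit involution $\iota$ on residues modulo a large power of $2$, a natural candidate being the bitwise complement on $\lambda + 1$ bits (possibly pre- or post-composed with a shift), such that $c(n,t)$ and $c(\iota(n),t')$ obey a predictable, essentially additive, relation. Passing the resulting inequality through asymptotic densities should yield at least the non-strict bound $c_t + c_{t'} \geq 1$.

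To upgrade to strict inequality I would attempt to locate a positive-density family of residues $n$ that lie in the small-carry set for $t$ and whose image $\iota(n)$ simultaneously lies in the small-carry set for $t'$; such residues contribute twice to $c_t + c_{t'}$. Concretely, one could try to read this off the finite automaton that generates the carry sequence $(c(n,t))_{n \geq 0}$: certain patterns in the binary expansion of $t$, most obviously a block of consecutive $\mathtt 1$s, create ``forced'' small-carry states that are preserved under $\iota$, and each such block should add a definite positive surplus to $c_t + c_{t'} - 1$.

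The principal obstacle is uniformity over \emph{all} $t$. For $t$ with a very thin or highly structured binary expansion, say a small number of widely separated $\mathtt 1$s, the forced surplus can shrink to zero and the direct combinatorial argument breaks down. This is presumably why the paper's main theorem assumes many blocks of $\mathtt 1$s in $t$ and weakens the conclusion to $c_t + c_{t'} > 1 - \varepsilon$; bridging the remaining gap to the strict inequality for every $t$ seems to demand an ingredient beyond the carry automaton, perhaps an ergodic or additive-combinatorial input, or a more refined symmetry that produces a surplus even in the degenerate cases.
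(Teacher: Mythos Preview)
The statement is an open conjecture; the paper does not prove it, and you yourself acknowledge that your sketch does not close the gap either. What the paper actually establishes is the approximation $c_t+c_{t'}>1-\varepsilon$ under a block-count hypothesis on $t$, together with the unconditional bound $c_t+c_{t'}\geq 15/16$ valid for every $t$.

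Your route is genuinely different. You work combinatorially through Kummer's carry count and hope to exhibit an involution $\iota$ on residues so that the small-carry events for $t$ and for $t'$ nearly cover, with the strict surplus coming from a positive-density overlap. The paper instead introduces an auxiliary array $\varphi(k,t)$, a variant of the distribution $k\mapsto\delta(k,t)$ with modified start vector, whose decisive feature is the \emph{exact} symmetry $\varphi(k,t)=\varphi(-k,t')$. This identity yields a closed expression for $c_t+c_{t'}$ as a weighted sum $\sum_\ell \tilde a_\ell\,\varphi(\ell,t)$ with all weights $\tilde a_\ell\geq 15/16$, which gives the unconditional lower bound at once and reduces the full simplified conjecture to the concrete pointwise inequality $\varphi(-1,t)+\varphi(0,t)+\varphi(1,t)\geq\max_{|k|\geq 2}\varphi(k,t)$. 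For the approximate theorem the paper then bounds the characteristic function $\omega_t(\vartheta)=\sum_k\varphi(k,t)\e(k\vartheta)$ via a $2\times 2$ matrix product indexed by the binary digits of $t$, each block of $\mathtt 1$s contributing a contraction factor.

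The concrete gap in your proposal is that no involution $\iota$ is actually produced, and the claimed ``predictable, essentially additive'' relation between $c(n,t)$ and $c(\iota(n),t')$ remains an unverified hope; bitwise complementation on the low $\lambda+1$ bits does not in general transport carry counts in the required way, because carries propagate across block boundaries in a manner that depends jointly on $n$ and on the block structure of $t$. By contrast, the paper's symmetry $\varphi(k,t)=\varphi(-k,t')$ is an exact identity proved from the recurrence, and it is this exactness that delivers the $15/16$ bound your outline never reaches. Your diagnosis of the residual obstacle---uniformity over $t$ with few blocks---matches the paper's: the reduced inequality above resists a direct induction.
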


Our main theorem is an approximation to this simplified conjecture.
\begin{theorem}\label{thm_main}
Assume that $\varepsilon>0$. There exists a constant $C=C(\varepsilon)$ such that
\[c_t+c_{t'}>1-\varepsilon,\]
if the binary expansion of $t$ contains at least $C$ blocks of consecutive $\mathtt 1$s.
\end{theorem}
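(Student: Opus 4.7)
My plan is to reduce the theorem to a quantitative central limit theorem for the carry count. Writing the standard identity
\[s(n+t)=s(n)+s(t)-\nu(n,t),\]
where $\nu(n,t)$ is the number of positions at which a carry is generated when $n$ is added to $t$ in base $2$, the condition $s(n+t)\geq s(n)$ becomes $\nu(n,t)\leq s(t)$, and hence
\[c_t=\dens\{n\geq 0:\nu(n,t)\leq s(t)\}.\]
A short telescoping calculation, summing $s(n+t)-s(n)$ over $n\in[0,2^M)$ for large $M$, shows that the density-mean of $\nu(\,\cdot\,,t)$ is precisely $s(t)$. Thus $c_t$ is the probability that an integer-valued random variable does not exceed its own expectation, and Cusick's original conjecture becomes the question whether $\nu(\,\cdot\,,t)$ places strictly more than half of its mass at or below its mean.

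I would then describe the distribution of $\nu(\,\cdot\,,t)$ via the two-state carry Markov chain. Scanning the bits of $t$ from low to high and feeding the chain at each position with an independent uniform bit of $n$, each maximal block of $\mathtt 1$s in $t$ of length $\ell$ triggers a segment of the trajectory whose contribution to $\nu$ is determined by $\ell$, the incoming carry state, and $\ell$ fresh uniform bits. If $t$ contains $B$ blocks of $\mathtt 1$s, one expects the variance of $\nu(\,\cdot\,,t)$ to grow at least linearly in $B$, and a Berry--Esseen-type bound for functionals of the carry chain, in the spirit of the central limit analysis of Drmota, Kauers and the author~\cite{DKS2016}, should yield
\[\bigl|c_t-\tfrac12\bigr|\leq\kappa/\sqrt{B}\]
for some absolute constant $\kappa$.

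To conclude, it remains to apply the same estimate to $c_{t'}$. Writing $t=2^\lambda+r$ with $0\leq r<2^\lambda$ and letting $k$ denote the position of the lowest $\mathtt 1$ of $r$ (when $r>0$), the expansions of $t$ and $t'=2^\lambda+(2^\lambda-r)$ coincide at position $\lambda$, at position $k$, and at all positions below $k$, while the bits at positions $k+1,\dots,\lambda-1$ are interchanged between $\mathtt 0$ and $\mathtt 1$. A short case analysis on the two endpoints of the complemented segment shows that the numbers of blocks of $\mathtt 1$s in $t$ and in $t'$ differ by at most $1$. Choosing $C=C(\varepsilon)$ so large that $\kappa/\sqrt{C-1}<\varepsilon/2$ therefore makes both $|c_t-\tfrac12|$ and $|c_{t'}-\tfrac12|$ smaller than $\varepsilon/2$, which gives $c_t+c_{t'}>1-\varepsilon$ as required.

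The principal obstacle is the quantitative CLT with an error term depending on $t$ only through $B$ and uniform over the detailed block pattern. The block contributions are not strictly independent — adjacent blocks share their incoming carry state, and a single $\mathtt 0$-gap between two long blocks creates genuine long-range correlations — so one must verify that $\mathrm{Var}(\nu(\,\cdot\,,t))\gtrsim B$ uniformly, and then derive an effective Berry--Esseen bound with explicit rate. A transfer-matrix argument along the blocks of $t$, combined with a Markov-chain Berry--Esseen theorem, should suffice, but establishing the required uniformity over all admissible block patterns is the delicate technical point on which the rest of the argument depends.
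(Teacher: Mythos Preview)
Your approach is genuinely different from the paper's and, if it could be completed, would yield a strictly stronger statement: a bound $|c_t-\tfrac12|\leq\kappa/\sqrt{B}$ for each individual $t$, from which the paper's inequality $c_t+c_{t'}>1-\varepsilon$ would follow trivially once you check (as you do, correctly) that the block counts of $t$ and $t'$ differ by at most one. The paper, by contrast, never controls $c_t$ and $c_{t'}$ separately. It works with the auxiliary distribution $\varphi(\cdot,t)$, exploits the exact symmetry $\varphi(k,t)=\varphi(-k,t')$ to write $c_t+c_{t'}$ as a single weighted sum over $\varphi$, and then bounds the characteristic function $\omega_t(\vartheta)=\sum_k\varphi(k,t)\e(k\vartheta)$ via an explicit $2\times 2$ matrix product, obtaining $|\omega_t(\vartheta)|\leq (1-\tfrac12\lVert\vartheta\rVert^2)^M$. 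Evaluating at $\vartheta=j/m$ and summing over residue classes modulo $m$ finishes the proof with everything explicit.

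The gap in your proposal is exactly the one you flag: the uniform quantitative CLT. You need $\mathrm{Var}(\nu(\cdot,t))\gtrsim B$ and a Berry--Esseen rate $O(1/\sqrt{B})$, both uniform over all block patterns, and you have not supplied either. The block contributions to $\nu$ are genuinely coupled through the carry state, and a single $\mathtt 0$-gap between long blocks transmits the carry with probability $1/2$; handling this uniformly is real work, not a formality. The paper's closing remark in fact singles out individual control of $c_t$ via the moment/CLT analysis of Emme--Prikhod'ko and Emme--Hubert as the subject of future research, which is precisely the programme you are sketching. So your plan is a reasonable research direction, but as a proof of the stated theorem it is incomplete at the decisive step; the paper's Fourier/matrix argument sidesteps this difficulty entirely at the cost of only obtaining information about the sum $c_t+c_{t'}$.
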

We note that an admissible value of $C(\varepsilon)$ can be made completely explicit.
Note also that this theorem gives a lower bound $c_t>1/2-\varepsilon$ for 
many values of $t$:
in fact, the number of integers $0\leq t<T$ having less than $C$ blocks of consecutive $\mathtt 1$s in its binary expansion is bounded by $T^\eta$ for some $\eta<1$.
The important point is the fact that obtain a very efficient method of finding many $t$ such that $c_t>1/2-\varepsilon$: we only have to start with an integer having sufficiently many blocks of $\mathtt 1$s and possibly invert the digits between the first and the last $\mathtt 1$ (corresponding to $t\mapsto t'$) in order to arrive at such a $t$.

The remainder of this paper is dedicated to the proof of Theorem~\ref{thm_main}.
Throughout the proof, we use the common notations $\e(x)=\exp(2\pi i x)$ and $\lVert x\rVert=\min_{k\in\mathbb Z}\lvert x-k\rvert$.
\section{Proof of the main theorem}
For $t\geq 0$ and $k\in\mathbb Z$, we define the densities
\[\delta(k,t)=
\dens\{n\in\mathbb N:s(n+t)-s(n)=k\}.\]
Again, these densities exist~\cite{B1972}.
These values satisfy the recurrence~\cite{DKS2016}
\begin{equation*}
\begin{aligned}
\delta(k,1)&=\begin{cases}2^{k-2},&k\leq 1;\\0&\mbox{otherwise;}\end{cases}
\\
\delta(k,2t)&=\delta(k,t);\\
\delta(k,2t+1)&=\frac 12 \delta(k-1,t)+\frac 12\delta(k+1,t+1).
\end{aligned}
\end{equation*}
Moreover, we define a simplified array $\varphi$ by modifying the start vector:
\begin{equation*}
\begin{aligned}
\varphi(k,1)&=\begin{cases}1,&k=0;\\0&\mbox{otherwise;}\end{cases}
\\
\varphi(k,2t)&=\varphi(k,t);\\
\varphi(k,2t+1)&=\frac 12 \varphi(k-1,t)+\frac 12\varphi(k+1,t+1).
\end{aligned}
\end{equation*}
The reason for the introduction of this array, and in fact also the reason for the definition of $t'$, is the following symmetry property~\cite{DKS2016}: we have
\[\varphi(k,t)=\varphi(-k,t').\]
Moreover, by linearity we have
\[\delta(k,t)=\sum_{\ell+s=k}\varphi(\ell,t)\delta(s,1)
=\sum_{\ell\geq 0}\varphi(k+1-\ell)2^{-\ell-1}.\]

We obtain
\begin{equation}\label{eqn_fundamental}
\begin{aligned}
c_t+c_{t'}&=\sum_{\ell\geq -1}\bigl(\varphi(\ell,t)+\varphi(-\ell,t)\bigr)
\left(1-2^{-\ell-2}\right)\\
&=
\frac 32 \varphi(0,t)+\frac {11}8 \varphi(1,t)+\frac {11}8 \varphi(-1,t)
\\&+\sum_{\ell\geq 2}\left(1-2^{-\ell-2}\right)\bigl(\varphi(\ell,t)+\varphi(-\ell,t)\bigr).
\end{aligned}
\end{equation}

\begin{remark}
From the above identity we immediately obtain $c_t+c_{t'}\geq 15/16$ by using the identity $\sum_{k\in\mathbb Z}\varphi(k,t)=1$;
it is obvious to suspect that the maximum of $\varphi(k,t)$ is attained for $\lvert k\rvert\leq 1$. This would yield $c_t>1/2$ or $c_{t'}>1/2$ and thus settle the simplified form of Cusick's conjecture.
However, this assumption is wrong: for $t=149$ the maximum is attained at the position $k=2$. 
Still, there is hope: in fact, it is sufficient to prove that 
\begin{equation}\label{eqn_sufficient}
\varphi(-1,t)+\varphi(0,t)+\varphi(1,t)\geq\varphi(k,t)
\quad\mbox{ for all $k$ such that $\lvert k\rvert\geq 2$}.
\end{equation}
This can be seen as follows:
under this hypothesis we have
\begin{align*}
c_t+c_{t'}&\geq \sum_{\lvert \ell\rvert\geq 2}\varphi(\ell,t)-\sum_{\lvert \ell\rvert\geq 2}2^{-\lvert \ell\rvert-2}\max_{\lvert \ell\rvert\geq 2}\varphi(\ell,t)\\
&+\varphi(-1,t)+\varphi(0,t)+\varphi(1,t)+\frac 38\left(\varphi(-1,t)+\varphi(0,t)+\varphi(1,t)\right)
\\&\geq 1+\max_{\lvert\ell\rvert\geq 2}\varphi(\ell,t)\left(\frac 38-\sum_{\lvert\ell\rvert \geq 2}2^{-\lvert \ell\rvert-2}\right)>1.
\end{align*}
It looks like a simple thing to prove~\eqref{eqn_sufficient} by induction on the length of the binary expansion of $t$, using the recurrence for $\varphi$; however, so far we did not succeed.
\end{remark}
We are going to work with the following expression, where $\vartheta\in\mathbb R$.
\begin{equation}\label{eqn_correlation}
\omega_t(\vartheta)
=
\sum_{k\in\mathbb Z}\varphi(k,t)\e(k\vartheta).
\end{equation}
Obviously, this sum is absolutely convergent.

Our strategy is to give an upper bound for $\lvert\omega_t(\vartheta)\rvert$, where $\vartheta=j/m$.
This will give us some information on the behaviour of $k\mapsto \varphi(k,t)$ on residue classes $b+m\mathbb Z$.
For each $b$, we take the least weight appearing in~\eqref{eqn_fundamental} for $\ell\in b+m\mathbb Z$ and multiply it with the sum
$\sum_{\ell\in b+m\mathbb Z}\varphi(\ell,t)$;
afterwards, we sum the contributions of the different residue classes, using the argument on the smallness of $\omega_t(j/m)$.
 
\begin{lemma}
We have the following recurrence for the values $\omega_t(\vartheta)$.
\begin{equation*}
\begin{aligned}
\omega_1(\vartheta)&=1,\\
\omega_{2t}(\vartheta)&=\omega_t(\vartheta),\\
\omega_{2t+1}&=\frac {\e(\vartheta)}2\omega_t(\vartheta)+\frac {\e(-\vartheta)}2\omega_{t+1}(\vartheta).
\end{aligned}
\end{equation*}
\end{lemma}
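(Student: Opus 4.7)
The plan is to derive each of the three identities by plugging the defining recurrence for $\varphi$ into the series~\eqref{eqn_correlation} and simplifying, treating the three cases in turn.

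First, the initial value $\omega_1(\vartheta)=1$ follows immediately from the definition $\varphi(k,1)=[k=0]$, since then only the $k=0$ term in the series survives and contributes $\e(0)=1$. The second identity $\omega_{2t}(\vartheta)=\omega_t(\vartheta)$ is equally direct: from $\varphi(k,2t)=\varphi(k,t)$, term-by-term comparison of the two series gives the result.

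The interesting identity is the third. I would expand
\[
\omega_{2t+1}(\vartheta)=\sum_{k\in\mathbb Z}\left(\tfrac12\varphi(k-1,t)+\tfrac12\varphi(k+1,t+1)\right)\e(k\vartheta),
\]
split the sum into two pieces, and perform the index substitutions $k=j+1$ in the first and $k=j-1$ in the second. The factor $\e(k\vartheta)$ becomes $\e(\vartheta)\e(j\vartheta)$ and $\e(-\vartheta)\e(j\vartheta)$, respectively, so that the two halves become $\tfrac{\e(\vartheta)}{2}\omega_t(\vartheta)$ and $\tfrac{\e(-\vartheta)}{2}\omega_{t+1}(\vartheta)$. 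Summing gives the claimed recurrence.

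There is no serious obstacle here; the only thing one should verify is the interchange of the two sums (the one defining $\omega_{2t+1}$ and the implicit one produced by shifting indices), and this is justified by the absolute convergence of the series defining $\omega_t(\vartheta)$ for every fixed $t$, which is already noted just after~\eqref{eqn_correlation}. Strictly speaking, absolute convergence for both $t$ and $t+1$ is used in the third case, but both hold by the same remark, so the manipulation is legitimate.
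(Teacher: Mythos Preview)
Your proof is correct and follows exactly the approach the paper indicates: the paper merely states that the proof is straightforward from the recurrence for $\varphi$, and you have written out precisely that computation. The index shifts and the appeal to absolute convergence are all in order.
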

The proof is straightforward, using the recurrence for $\varphi$.
This new recurrence can be written using $2\times 2$-matrices~\cite{MS2012}:
define 
\[A_0=\left(\begin{matrix}1&0\\\e(\vartheta)/2&\e(-\vartheta)/2\end{matrix}\right)
\quad\mbox{and}\quad
A_1=\left(\begin{matrix}\e(\vartheta)/2&\e(-\vartheta)/2\\0&1\end{matrix}\right).
\]

If $t=(\varepsilon_\nu\cdots \varepsilon_0)_2$ is the binary representation of $t\geq 1$, we have
\[
\omega_t(\vartheta)=
\left(\begin{matrix}1&0\end{matrix}\right)
A_{\varepsilon_0}\cdots A_{\varepsilon_{\nu-1}}
\left(\begin{matrix}1\\1\end{matrix}\right).
\]

\begin{lemma}\label{lem_omega_estimate}
Assume that the binary expansion of $t\geq 1$ contains at least $2M+1$ blocks of consecutive $\mathtt 1$s.
Then
\[\lvert \omega_t(\vartheta)\rvert\leq \left(1-\frac 12\lVert \vartheta\rVert^2\right)^M.\]
\end{lemma}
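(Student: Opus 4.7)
The plan is to prove Lemma~\ref{lem_omega_estimate} by induction on $M$, using the matrix product representation of $\omega_t$ introduced just above the statement. For the base case $M=0$ one needs $|\omega_t(\vartheta)|\leq 1$ for every $t\geq 1$; this is immediate because the rows of $A_0$ and $A_1$ each consist of entries whose absolute values sum to $1$, so these matrices are non-expansive in the $\ell^\infty$-norm on $\mathbb C^2$, and the starting vector $\binom{1}{1}$ has $\ell^\infty$-norm $1$.

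For the inductive step, I would decompose $A_{\varepsilon_0}\cdots A_{\varepsilon_{\nu-1}}$ into alternating power-blocks $A_1^{a_i}A_0^{b_i}$ corresponding to the runs of $\mathtt 1$s and $\mathtt 0$s in the binary expansion of $t$, and group two consecutive $\mathtt 1$-blocks (together with the intervening $\mathtt 0$-block) into a ``double block'' of shape $A_1^aA_0^bA_1^c$. If $t$ contains at least $2M+1$ blocks of $\mathtt 1$s, we obtain $M$ disjoint such double blocks, with the additional $A_0$-only segments between pairs and the leftover topmost $\mathtt 1$-block being non-expansive and hence absorbable. The induction therefore reduces to the contraction estimate
\[\bigl\|A_1^a A_0^b A_1^c\,v\bigr\|_\infty \leq \bigl(1-\tfrac12\|\vartheta\|^2\bigr)\|v\|_\infty\qquad(a,b,c\geq 1).\]
Since $A_0$ and $A_1$ are themselves $\ell^\infty$-non-expansive, it suffices to prove this for $a=c=1$ and arbitrary $b\geq 1$.

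For the last estimate I would compute $A_1A_0^bA_1$ explicitly: $A_0^b$ is lower triangular, with second row given by a closed-form geometric sum in $\e(-\vartheta)/2$, so the two row $\ell^1$-norms of the triple product can be written in closed form involving $\cos(2\pi\vartheta)$. The bound $1-\|\vartheta\|^2/2$ then follows from the elementary inequality $1-\cos(2\pi\vartheta)\geq 8\|\vartheta\|^2$ (valid for $\|\vartheta\|\leq 1/2$, a consequence of $\sin(\pi x)\geq 2x$ on $[0,1/2]$).

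The main obstacle I anticipate is obtaining uniformity in $b$: adding more copies of $A_0$ alters the second row of the product nontrivially, and a naive triangle-inequality bound does not obviously decrease the row sums. I would handle this by showing explicit monotonicity of the two row $\ell^1$-norms in $b$, or directly from the closed-form entries of $A_0^b$ by bounding the residual geometric-sum contributions.
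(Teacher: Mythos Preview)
Your approach is essentially the paper's: both use the row-sum ($\ell^\infty$) operator norm, the non-expansiveness of $A_0,A_1$, and a contraction factor extracted from each pair of $\mathtt 1$-blocks, together with the inequality $1-\cos(2\pi\vartheta)\geq 8\lVert\vartheta\rVert^2$. The only difference is bookkeeping: instead of your ``double blocks'' $A_1^aA_0^bA_1^c$, the paper observes that the end of every non-topmost $\mathtt 1$-block yields a three-letter window $(\varepsilon_j,\varepsilon_{j+1},\varepsilon_{j+2})\in\{(\mathtt 1,\mathtt 0,\mathtt 0),(\mathtt 1,\mathtt 0,\mathtt 1)\}$, and then bounds only the two fixed products $A_1A_0A_0$ and $A_1A_0A_1$ (via Delange's averaging lemma rather than a bare cosine computation). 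This formulation makes your anticipated ``uniformity in $b$'' obstacle disappear: for $b\geq 2$ one has $\lVert A_1A_0^bA_1\rVert_\infty\leq \lVert A_1A_0A_0\rVert_\infty\cdot\lVert A_0^{b-2}A_1\rVert_\infty\leq \lVert A_1A_0A_0\rVert_\infty$, so only the cases $b=1$ and $b=2$ (equivalently, the two three-letter patterns) need explicit treatment.
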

\begin{proof}
There are at least $M$ many positions $j\in\{0,\ldots,\nu-3\}$, having distance $\geq 3$ from each other, such that $(\varepsilon_j,\varepsilon_{j+1},\varepsilon_{j+2})=(\mathtt 1\mathtt 0\mathtt 0)$ or $(\varepsilon_j,\varepsilon_{j+1},\varepsilon_{j+2})=(\mathtt 1\mathtt 0\mathtt 1)$.
We show that for such a block we gain a factor of $1-\lVert \vartheta\rVert^2/2$. 
Using the row-sum norm $\lVert\cdot\rVert_\infty$ for matrices, which is derived from the maximum norm for vectors and which is sub-multiplicative,
is is sufficient to prove that 
\[\lVert A_1A_0A_0\rVert_\infty\leq 1-\frac 12\lVert \vartheta\rVert^2\quad\mbox{and}\quad \lVert A_1A_0A_1\rVert_\infty\leq 1-\frac 12\lVert \vartheta\rVert^2.\]
After a short calculation we obtain
\[A_1A_0A_0
=
\left(\begin{matrix}\frac{\e(\vartheta)}2+\frac 14+\frac{\e(-\vartheta)}8&\frac{\e(-3\vartheta)}8\\[2mm]
\frac{\e(\vartheta)}2+\frac 14&\frac{\e(-2\vartheta)}4
\end{matrix}\right)
\quad\mbox{and}
\]
\[A_1A_0A_1
=
\left(\begin{matrix}
\frac{\e(2\vartheta)}4+\frac{\e(\vartheta)}8&\frac 14+\frac{\e(-2\vartheta)}4+\frac{\e(-\vartheta)}8\\[2mm]
\frac{\e(2\vartheta)}4&\frac{\e(-\vartheta)}2+\frac 14
\end{matrix}
\right)
\]
and we see that the row-sum norm is strictly below $1$ as soon as $\vartheta\not\in\mathbb Z$.
More precisely, we use~\cite[Lemme~3]{D1972}, 
stating that
\[\left\lvert \frac 1q(1+z_1+\cdots+z_{q-1})\right\rvert
\leq 1-\frac 1{2q}\max_{1\leq j<q}(1-\Re z_j)
\]
for $\lvert z_1\rvert,\ldots,\lvert z_{q-1}\rvert\leq 1$.
For example, the left upper entry of $A_1A_0A_0$ can be bounded as follows:
\[
\left\lvert 
\frac{\e(\vartheta)}2+\frac 14+\frac{\e(-\vartheta)}8
\right\rvert
=
\frac 78
\left\lvert 
\frac 17\left(1+1+\e(-\vartheta)+4\cdot \e(\vartheta)\right)
\right\rvert
\leq \frac 78\left(1-\frac 1{14}\left(1-\Re \e(\vartheta)\right)\right).
\]

Analogously, the entry below, and also the right lower entry of $A_1A_0A_1$ may be bounded by
\[
\frac 34
\left\lvert
\frac 13
\left(
1+2\cdot \e(\vartheta)
\right)
\right\rvert
\leq \frac 34\left(1-\frac 16\left(1-\Re \e(\vartheta)\right)\right),
\]
while the right upper entry of this second matrix can be bounded by
\[\frac 58\left(1-\frac 1{10}\left(1-\Re \e(\vartheta)\right)\right).\]

It follows that $\lVert B\rVert_\infty\leq 1-\frac 1{16}\left(1-\Re \e(\vartheta)\right)$
for $B\in\{A_1A_0A_0,A_1A_0A_1\}$.
Moreover, we have the elementary inequality $\Re \e(\vartheta)=\cos(2\pi \vartheta)\leq 1-8\lVert \vartheta\rVert^2$,
so that
$\lVert B\rVert\leq 1-\frac 12\lVert \vartheta\rVert^2$.
This proves the lemma.
\end{proof}

We are interested in the quantity
\[
\psi(b,m,t)=\sum_{\ell\in b+m\mathbb Z}\varphi(\ell,t).\]

Moreover, we define
\[
\tilde a_\ell=\begin{cases}
3/2,&\mbox{if }\ell=0;\\
11/8,&\mbox{if }\lvert \ell\rvert=1;\\
1-2^{-\lvert\ell\rvert-2},&\mbox{if }\lvert \ell\rvert\geq 2.
\end{cases}
\]
Clearly $\tilde a_\ell\geq a_\ell\coloneqq 1-2^{-\lvert \ell\rvert-2}$ for all $\ell\in\mathbb Z$.
By~\eqref{eqn_fundamental}, we obtain
\begin{equation}\label{eqn_decomposition}
c_t+c_{t'}
\geq
\sum_{0\leq b<m}
\psi(b,m,t)
\min_{\ell\in b+m\mathbb Z} a_\ell.
\end{equation}

By monotonicity and symmetry of $a_\ell$, we obtain
\begin{equation}\label{eqn_a_ell_estimate}
\min_{\ell\in b+m\mathbb Z}a_\ell
=
\begin{cases}
1-2^{-b-2}&\mbox{if }0\leq b<m/2\\
1-2^{-(m-b)-2}&\mbox{if }m/2\leq b<m.
\end{cases}
\end{equation}
Moreover,
\begin{align*}
\psi(b,m,t)=
\sum_{\ell\in b+m\mathbb Z}
\varphi(\ell,t)
&=
\sum_{k\in \mathbb Z}
\varphi(k,t)
\frac 1m\sum_{0\leq j<m}
\e\left(j\frac{k-b}m\right)
\\
&=
\frac 1m\sum_{0\leq j<m}
\e\left(-\frac {jb}m\right)
\omega_t(j/m).
\end{align*}

By Lemma~\ref{lem_omega_estimate} it follows (using the abbreviation $x\pm y$ to stand for $x+O(y)$ with an implied constant $1$) that
\begin{equation}\label{eqn_psi_estimate}
\begin{aligned}
\psi(b,m,t)&=\frac 1m\pm\max_{1\leq j<m}\lvert \omega_t\left(j/m\right)\rvert
\\&=
\frac 1m\pm\left(1-1/(2m^2)\right)^M
=\frac 1m\pm e^{-M/(2m^2)},
\end{aligned}
\end{equation}
if $t$ has at least $2M+1$ blocks of consecutive $\mathtt 1$s in its binary expansion.

From~\eqref{eqn_decomposition} and~\eqref{eqn_psi_estimate} it follows that
\[c_t+c_{t'}
\geq 
\frac 1m\sum_{0\leq b<m}\min_{\ell\in b+m\mathbb Z}a_\ell
\pm me^{-M/(2m^2)}.
\]

It remains to consider mean values of the quantity in~\eqref{eqn_a_ell_estimate}. It is obvious that this mean value converges to $1$ for $m\rightarrow\infty$; quantitatively, we get for all $N\leq m$
\begin{align*}
\sum_{0\leq b<m}\min_{\ell\in b+m\mathbb Z}a_\ell
&\geq
\sum_{N\leq b<m-N}
\min_{\ell\in b+m\mathbb Z}a_\ell
\geq
(m-2N)\left(1-2^{-N-2}\right)\\
&\geq 
m\left(1-2^{-N-2}\right)-2N.
\end{align*}

We obtain 
\[c_t+c_{t'}\geq 1-2^{-N-2}-\frac{2N}m-me^{-M/(2m^2)}.\]

Let $\varepsilon\in(0,1)$ be given. We aim to define a bound $C$ as in the statement of the theorem.
Let $N=\lfloor - \log_2 \varepsilon\rfloor$+1, then clearly $2^{-N-2}<\varepsilon/3$.
Moreover, choose $m=\lfloor 6N/\varepsilon\rfloor +1$, then $2N/m<\varepsilon/3$.
Finally, let $M=\lfloor -2m^2\log(\varepsilon/(3m))\rfloor+1$, then  $me^{-M/(2m^2)}<\varepsilon/3$.
The choice $C=2M+1$ satisfies the claim of the theorem.
Asymptotically, an admissible choice for $C$ is given by 
$\alpha (\log \varepsilon)^3/\varepsilon^2$ for some constant $\alpha<0$ that can be given explicitly.

\begin{remark}
It would be desirable to improve our theorem in one or more of the following three aspects:
\begin{enumerate}
\item Prove statements on individual $c_t$ instead of the combined quantity $c_t+c_{t'}$.
\item Eliminate the quantity $\varepsilon$ appearing in our lower bound.
\item Prove statements for \emph{all} $t\geq 0$ instead of demanding the existence of many blocks of $\mathtt 1$s in the binary expansion of $t$.
\end{enumerate}
Of course, Cusick's original conjecture corresponds to (1)$\wedge$(2)$\wedge$(3), while the simplified form given above corresponds to (2)$\wedge$(3).

We expect that progress on (1) can be made by appealing to the study of moments of the probability distribution defined by $k\mapsto \delta(k,t)$ initiated by Emme and Prikhod'ko~\cite{EP2017} and pursued by Emme and Hubert~\cite{EH2018,EH2018b}.
This will be the subject of a future research paper.
\end{remark}

\bibliographystyle{siam}
\bibliography{M}

\begin{thebibliography}{10}

\bibitem{B1972}
{\sc J.~B\'esineau}, {\em Ind\'ependance statistique d'ensembles li\'es \`a la
  fonction ``somme des chiffres''}, Acta Arith., 20 (1972), pp.~401--416.

\bibitem{CLS2011}
{\sc T.~W. Cusick, Y.~Li, and P.~St{\u a}nic{\u a}}, {\em On a combinatorial
  conjecture}, Integers, 11 (2011), pp.~A17, 17.

\bibitem{D1972}
{\sc H.~Delange}, {\em Sur les fonctions {$q$}-additives ou {$q$}
  -multiplicatives}, Acta Arith., 21 (1972), pp.~285--298. (errata insert).

\bibitem{DY2012}
{\sc G.~Deng and P.~Yuan}, {\em On a combinatorial conjecture of {T}u and
  {D}eng}, Integers, 12 (2012), pp.~Paper No. A48, 9.

\bibitem{DKS2016}
{\sc M.~Drmota, M.~Kauers, and L.~Spiegelhofer}, {\em On a {C}onjecture of
  {C}usick {C}oncerning the {S}um of {D}igits of {$n$} and {$n+t$}}, SIAM J.
  Discrete Math., 30 (2016), pp.~621--649.
\newblock arXiv:1509.08623.

\bibitem{EH2018}
{\sc J.~Emme and P.~Hubert}, {\em Central {L}imit {T}heorem for {P}robability
  {M}easures {D}efined by {S}um-of-digits {F}unction in {B}ase 2}, 2018.
\newblock To appear in Annali della Scuola Normale Superiore.

\bibitem{EH2018b}
\leavevmode\vrule height 2pt depth -1.6pt width 23pt, {\em Normal distribution
  of correlation measures of binary sum-of-digits functions}, 2018.
\newblock Preprint, http://arxiv.org/abs/1810.11234.

\bibitem{EP2017}
{\sc J.~Emme and A.~Prikhod'ko}, {\em On the {A}symptotic {B}ehavior of
  {D}ensity of {S}ets {D}efined by {S}um-of-digits {F}unction in {B}ase 2},
  Integers, 17 (2017), pp.~A58, 28.

\bibitem{FRCM2010}
{\sc J.-P. Flori, H.~Randriambololona, G.~Cohen, and S.~Mesnager}, {\em On a
  {C}onjecture about {B}inary {S}trings {D}istribution}, Sequences and Their
  Applications - SETA 2010 Springer Berlin/Heidelberg (Ed.),  (2010),
  pp.~346--358.

\bibitem{MS2012}
{\sc J.~F. Morgenbesser and L.~Spiegelhofer}, {\em A reverse order property of
  correlation measures of the sum-of-digits function}, Integers, 12 (2012),
  pp.~Paper No. A47, 5.

\bibitem{QSF2016}
{\sc S.~Qarboua, J.~Schrek, and C.~Fontaine}, {\em New results about
  {T}u-{D}eng's conjecture}, 2016 IEEE International Symposium on Information
  Theory (ISIT),  (2016), pp.~485--489.

\bibitem{SW2019}
{\sc L.~Spiegelhofer and M.~Wallner}, {\em The {T}u--{D}eng conjecture holds
  almost surely}, Electron. J. Combin., 26 (2019), p.~P1.28.

\bibitem{TD2011}
{\sc Z.~Tu and Y.~Deng}, {\em A conjecture about binary strings and its
  applications on constructing {B}oolean functions with optimal algebraic
  immunity}, Des. Codes Cryptogr., 60 (2011), pp.~1--14.

\bibitem{TD2012}
\leavevmode\vrule height 2pt depth -1.6pt width 23pt, {\em Boolean functions
  optimizing most of the cryptographic criteria}, Discrete Appl. Math., 160
  (2012), pp.~427--435.

\end{thebibliography}
\end{document}